\theoremstyle{plain}
  \newtheorem{theorem}{Theorem}[section]
  \newtheorem{proposition}[theorem]{Proposition}
  \newtheorem{algorithm}[theorem]{Algorithm}
\theoremstyle{definition}
  \newtheorem{definition}[theorem]{Definition}
\theoremstyle{remark}
\numberwithin{equation}{section}
\def\S{\Sigma}
\def\D{\mathcal{D}}
\begin{document}

\title{Representing knots by filling Dehn spheres}

%    Information for first author
\author{\'Alvaro Lozano Rojo}
%    Address of record for the research reported here
\address{Centro Universitario de la Defensa Zaragoza, Academia General Militar
Carretera de Huesca s/n. 50090 Zaragoza, Spain --- IUMA, Universidad de
Zaragoza}
\email{alvarolozano@unizar.es}

%    \thanks will become a 1st page footnote.
\thanks{Both authors have been supported by the European Social Fund
and Diputaci\'on General
de Arag\'on (Grant E15 Geometr{\'\i}a), and by Spanish Government's
\emph{Programa Estatal de Fomento de la  Investigaci\'on Cient\'ifica y
T\'ecnica de
Excelencia} (research projects MTM2013-46337-C2 and MTM2013-45710-C2 for the
first and second author respectively)
}

%    Information for second author
\author{Rub\'en Vigara Benito}
\address{Centro Universitario de la Defensa Zaragoza, Academia General Militar
Carretera de Huesca s/n. 50090 Zaragoza, Spain --- IUMA, Universidad de
Zaragoza}
\email{rvigara@unizar.es}
%\thanks{Support information for the second author.}

%    General info
\subjclass[2000]{Primary 57N10, 57N35}

%\date{January 1, 2001 and, in revised form, June 22, 2001.}

\keywords{$3$-manifold, knots, links, immersed surface, filling Dehn
surface, branched coverings, Johansson diagrams}

\begin{abstract}
  We prove that any knot or link in any $3$-manifold can be nicely decomposed 
  (\emph{splitted}) by a filling Dehn sphere. This has interesting 
  consequences in the study of branched coverings over knots and links. We give
  an 
  algorithm for computing Johansson diagrams of filling Dehn surfaces out from 
  coverings of $3$-manifolds branched over knots or links.
\end{abstract}

\maketitle

\section{Introduction}\label{sec:intro}

Through the whole paper all $3$-manifolds are assumed to be orientable and 
closed, that is, compact connected and without boundary. On the contrary, 
surfaces are assumed to be orientable, compact and without boundary, but they
could be non-connected. $M$ generically denotes a $3$-manifold and $S$ a
surface. Although all the constructions can be adapted to work in the
topological or PL categories, we work in the smooth category: manifolds have a
differentiable 
structure and all maps are assumed to be smooth.

Let $M$ be a $3$-manifold.

A \emph{Dehn surface} in $M$~\cite{Papa} is a surface (the \emph{domain} of
$\S$) immersed in $M$ in general
position: with only double curve and triple point singularities. 
The Dehn surface $\S\subset M$ \emph{fills} $M$~\cite{Montesinos} if it defines
a cell-decomposition of $M$ in which the $0$-skeleton is the set $T(\S)$
of triple points of $\S$, the $1$-skeleton is the set 
$S(\S)$ of double and triple points of $\S$, and the $2$-skeleton is $\S$ 
itself (the notation $T(\S),S(\S)$ is similar to that introduced
in~\cite{Shima}). If $\Sigma$ is a Dehn surface in $M$, a connected component
of 
$S(\S)-T(\S)$ is an \emph{edge} of $\S$, a connected component of 
$\Sigma-S(\Sigma)$ is a \emph{face} of $\Sigma$, and a connected component of 
$M-\Sigma$ is a \emph{region} of $\Sigma$. The Dehn surface $\S$ fills $M$ if 
and only if all its edges, faces and regions are open $1$, $2$ or 
$3$-dimensional disks respectively.

Following ideas of~\cite{Haken1}, in~\cite{Montesinos} it is proved that every 
$3$-manifold has a filling Dehn sphere (see also~\cite{Anewproof}, and 
specially~\cite{Amendola09} where an extremely short and elegant proof of this 
result can be found), and filling Dehn spheres and their Johansson diagrams 
are proposed as a suitable way for representing all $3$-manifolds. A weaker 
version of  filling Dehn surfaces are the \emph{quasi-filling Dehn surfaces} 
defined in~\cite{Amendola02}, which are Dehn surfaces whose complementary set 
in $M$ is a disjoint union of open $3$-balls. In~\cite{FennRourke} it is proved 
that every $3$-manifold has a quasi-filling Dehn sphere. 
Since~\cite{Montesinos}, some other papers have been appeared on this subject 
(cf.~\cite{Amendola09,Amendola02,racsam,spectrum,Anewproof,RHomotopies,RFenn}), 
applying filling Dehn surfaces to different aspects of $3$-manifold topology. 
This is the first of a series of papers where we will give some tools for 
applying filling Dehn surfaces to knot theory.

Let $K$ be a tame knot or link in $M$.

\begin{definition}
  The filling Dehn surface $\S$ of $M$ \emph{splits} $K$ if:
  \begin{enumerate}

    \item $K$ intersects $\S$ transversely in a finite set of non-singular 
      points of $\S$;

    \item $K-\S$ is a disjoint union of open arcs;

    \item for each region $R$ of $\S$, if the intersection $R\cap K$ is 
      non-empty it is exactly one arc, unknotted in $R$;

    \item for each face $F$ of $\S$, the intersection $F\cap K$ contains at most
      one point.
      
  \end{enumerate}
\end{definition}

\begin{theorem}
  \label{thm:knots-can-be-splitted}
  Every link $K$ in a $3$-manifold $M$ can be splitted by a filling Dehn sphere 
  of $M$.

  Moreover, this filling Dehn sphere can be chosen such that it intersects 
  exactly twice each connected component of $K$.
\end{theorem}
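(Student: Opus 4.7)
My plan is to start from an arbitrary filling Dehn sphere $\S_0$ of $M$, which exists by the Montesinos--Amendola theorem recalled in the introduction, and to modify it by a controlled sequence of local surgeries until it splits $K$ in the required strong sense. After isotoping $K$ into general position with respect to $\S_0$, conditions (1) and (2) of the definition are automatic: $K$ meets $\S_0$ transversely at a finite set of non-singular points, and the components of $K\setminus\S_0$ are open arcs. The bulk of the work is to enforce conditions (3) and (4) and then the ``exactly twice'' refinement.

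For condition (4), I would subdivide every face of $\S_0$ that carries two or more intersection points by performing a small \emph{finger move}, pushing a strip of $\S_0$ across the face so as to create a new edge and a pair of triple points that separate the offending intersection points into distinct faces. For condition (3) I would treat two separate defects. If a region $R$ of $\S_0$ meets $K$ in more than one arc, push a sheet of $\S_0$ (issuing from a nearby face of $\partial R$) across $R$ to divide $R$ into sub-regions so that each new sub-region contains at most one arc. If a single arc $\alpha\subset R$ is knotted in $R$, push fingers of $\S_0$ along $\alpha$ until $\alpha$ lies in a thin tubular sub-region of $R$, where it is trivially unknotted.

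For the second part of the theorem, suppose that after the above steps some component $K_i$ still crosses $\S_0$ in $2n>2$ points, and let $p,q$ be two consecutive intersection points joined by an arc $\alpha\subset R$ of $K_i\setminus\S_0$. A finger move pushing a disk of $\S_0$ through $R$ so as to engulf $\alpha$ merges $R$ with the adjacent region on the far side of $\alpha$ and cancels both $p$ and $q$. Iterating this cancellation on each component drives the intersection count down to the minimum value~$2$. Since different components of $K$ are disjoint, their cancellations can be carried out independently in disjoint neighbourhoods.

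The main obstacle is to verify that each of these local moves (a)~keeps the domain of $\S$ homeomorphic to $S^{2}$, so that $\S$ remains a Dehn \emph{sphere}; (b)~preserves the filling property, so that regions remain open $3$-balls, faces open $2$-disks and edges open intervals; and (c)~does not reintroduce previously-resolved violations of conditions (3) or (4). Each move should be described as an explicit modification of the Johansson diagram of $\S$, and termination should be controlled by a lexicographic complexity such as the triple (total number of $K\cap\S$ points, number of faces met more than once, number of regions met more than once). The cancellation step in the last paragraph is the most delicate, since one must ensure the finger that swallows $\alpha$ is compatible with the surrounding combinatorics of $\S$ without incidentally reconnecting the domain into a higher-genus surface.
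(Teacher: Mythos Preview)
Your approach is genuinely different from the paper's. The paper does not repair a pre-existing filling Dehn sphere at all; it starts from a triangulation $T$ of $M$ in which $K$ is a subcomplex and applies the \emph{inflating} construction: the simplices of $T$ are blown up to a filling collection of spheres, which are then connected by surgery into a single filling Dehn sphere $\S_T$. Because $K$ is simplicial in $T$, the splitting conditions (1)--(4) hold automatically by construction. For the second statement the paper replaces, around each component $K_i$, the vertex- and edge-spheres of the inflation by a single immersed sphere enclosing $K_i$ (the ``Dehn sphere containing the knot'' of Figure~\ref{fig:inflated_trefoil}), so that $K_i$ meets the result exactly twice. Everything is arranged globally from the outset rather than fixed by local moves.

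Your strategy for the first statement is plausible in spirit, but the key verifications you list under (a)--(c) are not carried out, and they are not routine. For example, ``pushing a sheet of $\S_0$ across $R$'' to separate two arcs does not obviously cut the $3$-ball $R$ into two $3$-balls: a finger is a tube, not a properly embedded disk, and a pillar in a ball does not disconnect it. Similar care is needed for the unknotting step.

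The genuine gap is in the cancellation move for the second statement. If $\alpha\subset R$ runs from $p\in F_p$ to $q\in F_q$, ``engulfing $\alpha$'' so as to delete both $p$ and $q$ forces $R$ to merge with the regions on the far sides of $F_p$ and $F_q$; that either removes faces from $\S$ (a non-local change with no reason to preserve the $S^2$ domain) or produces a region that is no longer a $3$-ball. The alternative of isotoping $K$ to pull $\alpha$ back through $F_p$ does lower $|K\cap\S|$ by two, but it concatenates $\alpha$ with the arc in the adjacent region, typically creating a knotted arc or a region meeting $K$ twice---so condition~(3) is violated again and your lexicographic complexity does not decrease. There is no evident local move that reduces $|K_i\cap\S|$ while preserving all of (1)--(4), which is exactly why the paper achieves the ``exactly twice'' refinement by modifying the inflating construction globally around each $K_i$ rather than by post-hoc cancellation.
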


\begin{proof}
  The proof is a direct consequence of the \emph{inflating of triangulations},
  construction introduced
  in~\cite{peazolibro,RHomotopies,tesis}.
  
  Take a smooth triangulation $T$ of $M$ \cite{WhiteheadSmoothTriang} such 
  that $K$ is simplicial in $T$. This can be done because $K$ is tame. 
  In~\cite{peazolibro,RHomotopies,tesis} it is shown how we can ``inflate''
  the 
  $0$,$1$ and $2$-dimensional simplices of $T$ in order to obtain a filling
  collection of 
  spheres in $M$, and how we can connect them until we obtain a unique filing
  Dehn sphere $\S_T$ of $M$. By construction it is easy to check that $\S_T$ 
  splits $K$. In fact $\S_T$ splits every link which is a subcomplex of $T$.

  For the second statement, we modify $\S_T$ slightly around each connected 
  component $K_i$ of $K$. Instead of introducing one sphere for each vertex and 
  for each edge of the triangulation $T$ belonging to $K_i$, we take a unique 
  Dehn sphere containing $K_i$ as in Figure~\ref{fig:inflated_trefoil}, while 
  the rest of $\S_T$ remains the same. With this modification, $\S_T$ 
  intersects $K_i$ twice. By repeating the same operation around each connected 
  component of $K$ we get the desired result.
\end{proof}

\begin{figure}
  \centering
  \includegraphics[width=0.6\textwidth]{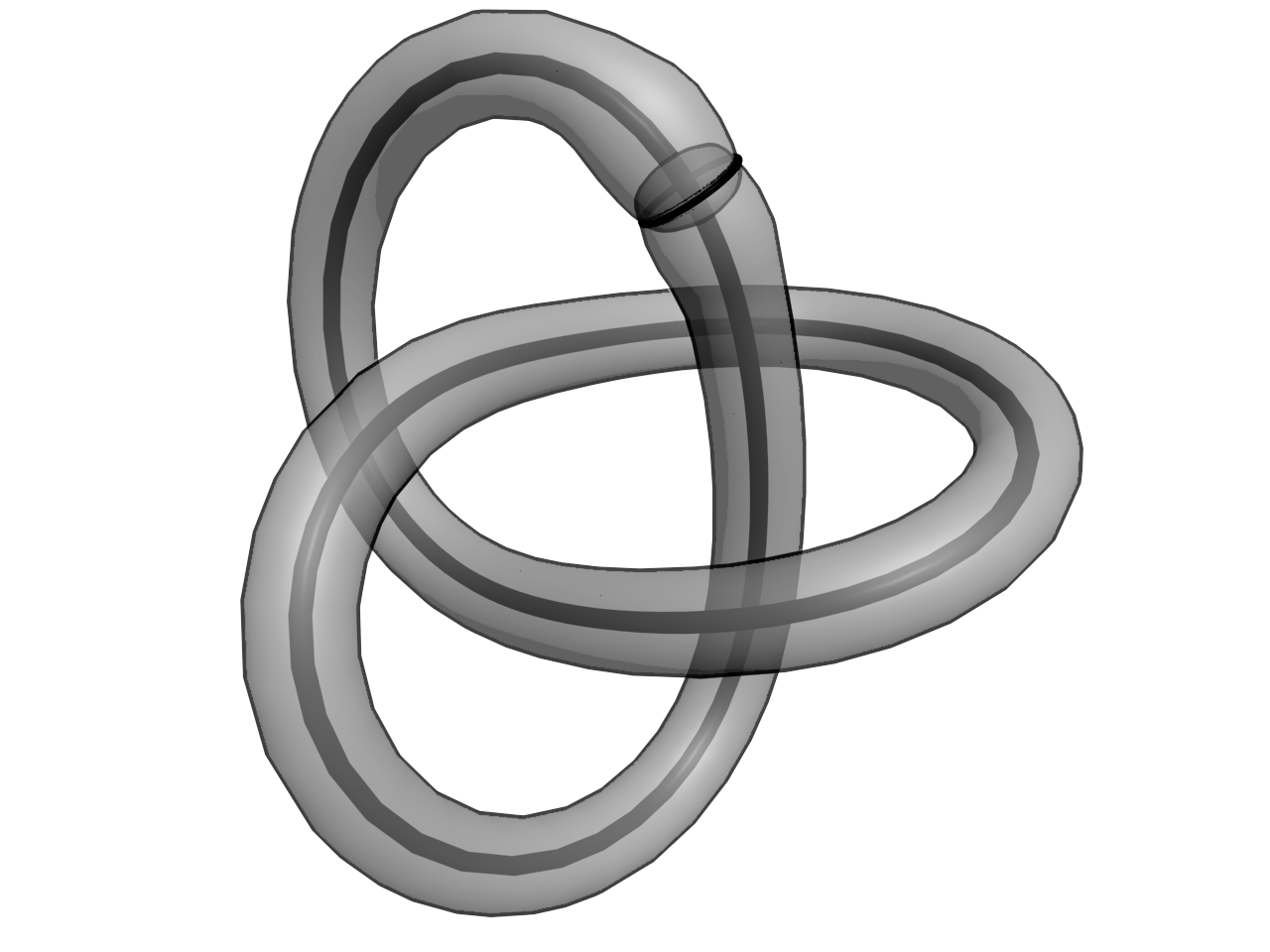}
  \caption{A Dehn sphere containing the trefoil knot}
  \label{fig:inflated_trefoil}
\end{figure}

If the filling Dehn
surface $\S$ splits $K$, when we draw the points $P_1,P_2,\ldots,P_k$ of 
$\S\cap K$ on the domain $S$ of $\S$ 
together with the Johansson diagram $\D$ of
$\S$, we
obtain the \emph{Johansson diagram} $(\D,\{P_1,P_2,\ldots,P_k\})$ of the knot or
link $K$.
Because the manifold $M$ can be built up from
$\D$~\cite{peazolibro,tesis} and the arcs of $\S-K$
are unknotted, we have:
\begin{proposition}
 It is
possible to recover the pair $(M,K)$ from the Johansson diagram
$(\D,\{P_1,P_2,\ldots,P_k\})$ of the knot $K$.
\end{proposition}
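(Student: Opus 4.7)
The plan is to bootstrap on the fact that $M$ itself can be reconstructed from the Johansson diagram $\D$ alone, and then add the knot region by region. First, I would apply the standard procedure of \cite{peazolibro,tesis} to $\D$: this recovers $M$ together with the immersion $f\colon S\to M$ whose image is the filling Dehn sphere $\S$, and with it the induced cell decomposition of $M$ into edges, faces and regions. Since each $P_i$ is a non-singular point of $\S$ (by part~(1) of the splitting definition), it has a unique image $f(P_i)\in M$, and the set $\{f(P_1),\ldots,f(P_k)\}$ is exactly $K\cap\S$ inside the reconstructed manifold.

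Next, I would read off the arcs of $K-\S$ region by region. Each region $R$ of $\S$ is an open $3$-ball; by splitting conditions~(2) and~(3), $R\cap K$ is either empty or a single unknotted arc with its two endpoints on $\partial R$. Each $P_i$ lies on the closure of at most two regions, one for each of its two local sides on $S$, so the diagram tells us, for every region $R$, which of the $f(P_i)$'s can possibly be endpoints of an arc in $R$. Condition~(3) bounds this number by two, and condition~(4) guarantees that the two endpoints lie in distinct faces of $\partial R$, so the pairing of endpoints is unambiguous. I would then insert an unknotted properly embedded arc in each occupied region joining the prescribed pair of points, and define $\widehat{K}$ to be the union of these arcs.

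The resulting link $\widehat{K}\subset M$ is well defined up to ambient isotopy of $M$ relative to $\S$, because any two unknotted properly embedded arcs in a $3$-ball with the same endpoints are ambient isotopic rel the boundary sphere. Applying the same fact to $K$ itself (which splits in the sense that produced $\widehat K$) yields an ambient isotopy of $M$ sending $\widehat{K}$ to $K$, so the pairs $(M,\widehat{K})$ and $(M,K)$ agree. The main obstacle is not the unknottedness argument, which is classical, but the bookkeeping needed to guarantee that, purely from the abstract data $(\D,\{P_1,\ldots,P_k\})$, one can identify correctly which pair of marked points bounds each regional arc; this is exactly what splitting conditions~(3) and~(4) are designed to ensure, and they enter the proof at precisely this step.
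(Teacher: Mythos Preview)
Your proposal is correct and follows essentially the same approach as the paper. The paper offers only the single sentence preceding the proposition as justification---that $M$ can be rebuilt from $\D$ and that the arcs of $K-\S$ are unknotted---and your argument is a careful unpacking of precisely those two ingredients: reconstruct $M$ and $\S$ from $\D$, locate the marked points, and then reinsert one unknotted arc per occupied region, appealing to the standard uniqueness of unknotted boundary-parallel arcs in a $3$-ball to get the isotopy class right.
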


If $K$ is a knot, we will say
that a filling Dehn sphere $\S$ that splits $K$ is \emph{diametral} for $K$ or
that \emph{diametrically splits} $K$ if it intersects $K$ exactly twice. In
this case, if $\D$ is the Johansson diagram of $\S$ and $\S\cap K=\{P_1,P_2\}$,
the pair $(\D,\{P_1,P_2\})$ is a \emph{diametral Johansson diagram} of the
knot $K$. As it
will be shown in Section~\ref{sec:unknot}, diametral filling Dehn spheres are
specially useful for studying the branched covers of $M$ with branching set $K$.
For a given knot $K$, it should be interesting to have an algorithm that
provides its simplest diametrically splitting filling Dehn sphere. Of course,
before that we would have to define ``simplest", perhaps in terms of the
number
of triple points and/or double curves of the filling Dehn sphere.
 
Algorithms for computing the Johansson diagram of a filling Dehn sphere of $M$
from a Heegard diagram of $M$ are provided in~\cite{Montesinos,Anewproof}.
In Section~\ref{sec:splitting} we outline an algorithm
for computing such Johansson diagrams from branched covers of $M$ with
branching set a tame knot or link $K$ in $M$. As an application of
this algorithm,
we apply it in Section~\ref{sec:unknot} to the coverings of $S^3$ branched over
the unknot.

\section{Dehn surfaces and their Johansson's diagrams}
\label{sec:Dehn-surfaces-Johansson-diagrams} 

We refer to~\cite{RHomotopies,racsam,spectrum} for more detailed definitions
about Dehn sufaces and Johansson diagrams, and 
to~\cite{BersteinEdmonds,MontesinosBranched}
as a basic reference about
branched coverings.

A subset $\S\subset M$ is a Dehn surface in $M$ if there 
exists a surface $S$ and a general
position immersion $f:S\rightarrow M$ such 
that $\Sigma=f\left( S\right)$. In this situation we say that $S$ is the 
\emph{domain} of $\S$ and that $f$ \emph{parametrizes} $\Sigma$. If $S$ is a 
$2$-sphere, a torus,\ldots  then $\Sigma$ is a \emph{Dehn sphere}, a 
\emph{Dehn torus},\ldots respectively. 

Let $\S$ be a Dehn surface in $M$ and consider a parametrization $f:S\to M$ 
of $\S$. The preimage under $f$ in $S$ of the 
singularity set $S(\S)$ of $\Sigma$, together with the information about how
its 
points become identified by $f$ in $\S$ is the \emph{Johansson diagram} $\D$ 
of $\S$ (see~\cite{Johansson1,Montesinos}).

Because $S$ is compact and without boundary, double curves are closed and there
is a finite number
of them. The number of triple points is also finite. Because $S$ and $M$ are
orientable, the preimage under $f$ of a double curve of $\S$ is the union of two
different closed curves in $S$, and we will say that these two curves are
\emph{sister curves} of $\D$. Thus, the Johansson diagram of $\S$ is composed by
an even number of different closed curves in $S$, and we will  identify $\D$
with  the  set of different  curves that compose it. For any curve $\alpha\in\D$
we denote by $\tau \alpha$ the sister curve of $\alpha$ in $\D$. This defines a
free involution $\tau:\D\rightarrow\D$, the \emph{sistering} of $\D$, that sends
each curve of $\D$ into its sister curve of $\D$.

If we are given an \emph{abstract diagram}, i.e., an even collection of
curves 
in $S$ coherently identified in pairs, it is possible to know if this 
abstract diagram is \emph{realizable}, that is, if it is actually the Johansson
diagram 
of a Dehn surface in a $3$-manifold (see~\cite{Johansson1,Johansson2,tesis}). 
It is also possible to know if the abstract diagram is \emph{filling}: if it
is 
the Johansson diagram of a filling Dehn surface of a $3$-manifold 
(see~\cite{tesis}). If $\S$ fills $M$, it is possible to build $M$ out of the
Johansson diagram of $\S$. Thus, filling Johansson diagrams represent all
closed, orientable $3$-manifolds.
When a diagram $\D$ in $S$ is not realizable, the
quotient space of $S$ under the equivalence relation defined by the diagram is
something very close to a Dehn surface: it is a 2-dimensional complex with
simple, double and triple points, but it cannot be embedded in any $3$-manifold.
We reserve the name \emph{pseudo Dehn surface} for these objects. Many
constructions about Dehn surfaces, as the presentation of their fundamental
group given in~\cite{spectrum}, for example, are also valid
for pseudo Dehn surfaces.

A special case of filling Dehn surfaces is when the domain $S$ of the
filling Dehn surface $\S$ is a disjoint union of $2$-spheres. In this case, we
say that $\S$ is a \emph{filling collection of spheres}. Starting from a
filling collection of surfaces $\S$ in $M$ we can always reduce the number of
connected components of the domain $S$ of $\S$ without loosing the filling
property by applying surgery modifications (see
\cite{Banchoff,tesis,peazolibro}). Eventually, starting from a filling
collection of surfaces in $M$
we can always obtain a filling Dehn sphere of $M$.

\section{Splitted knots and branched coverings}\label{sec:splitting}

Let $K$ be a tame knot or link in $M$.

Assume that the filling Dehn surface $\S$ of $M$ splits $K$. Let 
$R_1,R_2,\ldots,R_m$ be all the different regions of $\S$ whose intersection 
with $K$ is empty, and take one point $Q_i$ in $R_i$ for $i=1,2,\ldots,m$. 
Then, $\S-K$ is a strong deformation retract of
\[
  M-(K\cup{Q_1,Q_2,\ldots,Q_m})\,,
\]
and therefore,

\begin{proposition}
 The fundamental group of $M-K$ is isomorphic to the fundamental 
 group of $\S-K$. \qed
\end{proposition}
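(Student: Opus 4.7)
The plan is to establish the strong deformation retract claim that the paper asserts just before the proposition, and then to reduce $\pi_1(M-K)$ to $\pi_1(M-(K\cup\{Q_1,\ldots,Q_m\}))$ by the standard fact that puncturing a $3$-manifold at finitely many points does not change the fundamental group. Composing these two isomorphisms will then yield $\pi_1(M-K)\cong\pi_1(\Sigma-K)$.

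First I would construct the deformation retract region by region. Since $\Sigma$ fills $M$, each region $R$ is an open $3$-ball, and its closure carries a characteristic map $\phi_R\colon\bar B^{3}\to\bar R$ which is a homeomorphism on the open ball and sends $\partial B^{3}$ onto $\bar R\cap\Sigma$. For a region $R_i$ with $R_i\cap K=\emptyset$, I pick the auxiliary point $Q_i$ and pull back the radial strong deformation retract of $\bar B^{3}\setminus\phi_{R_i}^{-1}(Q_i)$ onto $\partial B^{3}$, obtaining a strong deformation retract of $\bar R_i\setminus\{Q_i\}$ onto $\bar R_i\cap\Sigma$. For a region $R$ meeting $K$ in a single arc $\alpha$, the splitting hypothesis says $\alpha$ is unknotted in $R$ with both endpoints on $\Sigma$, so there is a homeomorphism of pairs $(\bar R,\bar\alpha)\cong(\bar B^{3},I)$ where $I$ is a standard diameter. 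In this model $\bar B^{3}\setminus I\cong(\bar D^{2}\setminus\{0\})\times[0,1]$ has an obvious radial strong deformation retract onto $\partial B^{3}\setminus\partial I$, which transports to a strong deformation retract of $\bar R\setminus\alpha$ onto $(\bar R\cap\Sigma)\setminus\partial\alpha$. Each of these retractions fixes $\bar R\cap\Sigma$ pointwise throughout the homotopy, so they agree on overlaps and glue to a global strong deformation retract of $M-(K\cup\{Q_1,\ldots,Q_m\})$ onto $\Sigma-K$; in particular, $\pi_1(\Sigma-K)\cong\pi_1(M-(K\cup\{Q_1,\ldots,Q_m\}))$.

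For the second step, I would note that a small punctured $3$-ball neighbourhood of each $Q_i$ in $M-K$ is simply connected (it deformation retracts to $S^{2}$), so van Kampen applied to $(M-K)\setminus\{Q_1,\ldots,Q_m\}$ together with such neighbourhoods shows that puncturing introduces no new generators or relations; equivalently, loops and homotopies of loops in the $3$-manifold $M-K$ can be put in general position avoiding the finite set $\{Q_1,\ldots,Q_m\}$. This gives $\pi_1(M-K)\cong\pi_1(M-(K\cup\{Q_1,\ldots,Q_m\}))$, and composing with the previous isomorphism proves the proposition. The main obstacle is the second case of the region-by-region retraction: one has to use the unknottedness hypothesis to reduce to the standard arc-in-ball model, and then verify that the patched retraction remains continuous across the singular strata of $\Sigma$, where $\phi_R$ identifies pairs and triples of points on $\partial B^{3}$. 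The filling condition (each face, edge and triple point is the image of an open cell of the correct dimension) makes this combinatorially straightforward, but it deserves a careful check rather than being treated as automatic.
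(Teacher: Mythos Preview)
Your proposal is correct and follows exactly the route the paper indicates: the paper's entire argument is the sentence preceding the proposition, asserting that $\Sigma-K$ is a strong deformation retract of $M-(K\cup\{Q_1,\ldots,Q_m\})$, together with the implicit fact that deleting finitely many points from a $3$-manifold does not change $\pi_1$. You have simply unpacked both of these steps in the natural way (region-by-region radial retraction using the unknottedness of $K\cap R$, and van Kampen or general position for the punctures), so there is nothing to add or correct.
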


Our main interest on knots splitted by filling Dehn surfaces is due to the
following theorem.

\begin{theorem}
  \label{thm:lift-of-filling-surfaces}
  If $p:\hat{M}\to M$ is a branched covering with downstairs branching set $K$,
  then $\hat{\S}=p^{-1}(\S)$ is a filling Dehn surface of $\hat{M}$.
\end{theorem}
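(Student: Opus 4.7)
The plan is to prove the theorem in two stages: first, verify that $\hat{\S}$ is a Dehn surface of $\hat{M}$; second, verify that it fills $\hat{M}$ by pulling back the cell decomposition of $M$ given by $\S$ to a cell decomposition of $\hat{M}$ with $\hat{\S}$ as its $2$-skeleton. For the first stage I would analyze $\hat{\S}$ locally. Over any point $x \in M - K$ the map $p$ is a local diffeomorphism, so the local structure of $\hat{\S}$ at each preimage of $x$ is the same as that of $\S$ at $x$: smooth, double curve, or triple point. Over a point $P \in K \cap \S$, the splitting hypothesis says that $P$ is a non-singular point of $\S$ at which $K$ meets $\S$ transversely; the standard local model for $p$ near such a point is $(z,t) \mapsto (z^n, t)$ with branch set the $t$-axis, and transversality allows us to take $\S$ locally to be the disk $\{t=0\}$, whose preimage upstairs is again the smoothly embedded disk $\{t=0\}$. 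Thus no new singularities appear over $K$, and $\hat{\S}$ is a Dehn surface with $S(\hat{\S}) = p^{-1}(S(\S))$ and $T(\hat{\S}) = p^{-1}(T(\S))$.

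For the second stage I would pull back the cell decomposition of $M$ induced by $\S$. The key observation, furnished by the splitting hypothesis, is that $K$ avoids the $1$-skeleton $S(\S)$---meeting $\S$ only in face interiors and at most once per face (condition (4))---and that it intersects each region in at most one unknotted arc (condition (3)). Over the $0$- and $1$-skeleton the map $p$ is therefore unbranched, so triple points of $\S$ lift to finite sets of triple points of $\hat{\S}$, and edges of $\S$ (open intervals) lift to disjoint unions of open intervals, i.e.\ edges of $\hat{\S}$. For any face $F$ or region $R$ of $\S$ that avoids $K$, the preimage $p^{-1}(F)$ (resp.\ $p^{-1}(R)$) is an unbranched covering of a simply connected open disk (resp.\ ball) and so splits into a disjoint union of open $2$-disks (resp.\ $3$-balls).

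The main obstacle lies in the pieces that meet $K$. For a face $F$ with $F \cap K = \{P\}$, the punctured face $F \setminus \{P\}$ has $\pi_1 \cong \mathbb{Z}$, so each connected component of $p^{-1}(F \setminus \{P\})$ is a finite cyclic cover of it, hence again a once-punctured open disk; the local branched model above then shows that filling in the corresponding point of $p^{-1}(P)$ completes the piece to a smooth open $2$-disk, a face of $\hat{\S}$. For a region $R$ with $R \cap K = A$ a single unknotted arc, the unknottedness identifies $(R,A)$ with an open $3$-ball equipped with an open diameter, so $R \setminus A$ has $\pi_1 \cong \mathbb{Z}$ generated by a meridian of $A$; each component of $p^{-1}(R)$ is therefore a cyclic branched cover of $R$ along $A$, and a direct computation with the local model $(z,t) \mapsto (z^n, t)$ shows that such a cover is again an open $3$-ball, a region of $\hat{\S}$. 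Assembling these pieces with the unbranched ones produces a cell decomposition of $\hat{M}$ whose $0$-, $1$- and $2$-skeleta are respectively $T(\hat{\S})$, $S(\hat{\S})$ and $\hat{\S}$, which is exactly the condition that $\hat{\S}$ fill $\hat{M}$.
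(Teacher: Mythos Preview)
Your proof is correct and follows essentially the same approach as the paper's: first checking that $\hat{\S}$ is a Dehn surface because $K$ meets $\S$ only at non-singular points, then verifying that edges, faces, and regions of $\hat{\S}$ are open cells by splitting into the unbranched case (covering of a simply connected cell) and the branched case (branched cover of a disk over a point, or of a ball over an unknotted arc). Your version is more explicit---supplying the local model $(z,t)\mapsto(z^n,t)$ and the $\pi_1$ computations---but the underlying argument is the same as the paper's.
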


\begin{proof}
  As $\S\cap K$ contains no singular point of $\S$, $\hat{\S}$ is a Dehn 
  surface in $\hat{M}$ and all the edges of $\hat{\S}$ are open $1$-arcs.
  
  If a face $\hat{F}$ of $\hat{\Sigma}$ does not intersect the lift
  $\hat{K}:=p^{-1}(K)$ of the link $K$, then $\hat{F}$ is a regular covering of
  a face of $\Sigma$. In the other case, $\hat{F}$ is a branched covering of a
  face F with branching set the unique point of $F\cap K$
  In both cases $\hat{F}$ must be an open $2$-disk. In the 
  same way, a region $\hat{R}$ of $\hat{\S}$ is a regular covering space of a 
  region of $\S$, if $\hat{R}$ does not intersect $\hat{K}$, or it is a 
  branched covering space of a region $R$ of $\S$ whose branching set in $R$ is 
  the unknotted arc $R\cap K$. In both cases, $\hat{R}$ must be an open 
  $3$-ball. This implies that $\hat{\S}$ fills $\hat{M}$.
\end{proof}

In the previous theorem the filling Dehn surfaces $\S$
and $\hat{\S}$ could have different domains. 
Let $S$ be the domain of $\S$, let $f:S\to M$ be a parametrization of $\S$ and
let $f^{-1}(K)$ be the set of points $\{P_1,P_2,\ldots,P_k\}$. We will denote
by $M_K$ and $S_K$ the sets
$$M-K\qquad \text{and}\qquad S-\{P_1,P_2,\ldots,P_k\}\,,$$
respectively. Take
a non-singular point
$x$ of $\S$ as the base point of the fundamental group $\pi_K:=\pi_1(M_K,x)$ of
$M_K$. We
denote also by $x$ the preimage of $x$ under $f$, and we choose it as base
point of the fundamental group $\pi_{S_K}:=\pi_1(S_K,x)$ of
$S_K$. The map $f_K$ defined as
\[
  f_K=f|_{S_K}:S_K\to M_K
\]
induces an homomorphism
\[
  \left(f_K\right)_*: \pi_{S_K}\to \pi_K\,.
\]
On the other hand, the branched covering $p:\hat{M}\to M$ has an
associated monodromy homomorphism
$\rho:\pi_K\to \Omega_n$ into the group $\Omega_n$ of permutations of $n$
elements, with $n=1,2,\ldots$.  

In~\cite{spectrum}, in order to give a presentation of the fundamental group of
a Dehn surface,
it is made a detailed study of unbranched coverings of Dehn surfaces. In
particular, for a covering
$p:\hat{\S}\to \S$ it is shown how to construct the domain $\hat{S}$ of
$\hat{\S}$
and the Johansson diagram of $\hat{\S}$ on $\hat{S}$. Although the results given
in 
\cite{spectrum} are stated for orientable Dehn surfaces of genus $g$ (\emph{Dehn
$g$-tori}),
the construction is also valid for more general surfaces, as the punctured
surface $\S_K$.

\begin{theorem}
  \label{thm:domain-of-lifted-surface}
  The domain $\hat{S}$ of $\hat{\S}$ is a branched covering space 
  $p_S:\hat{S}\to S$ with downstairs branching set $\{P_1,P_2,\ldots,P_k\}$ and 
  monodromy homomorphism $\rho\circ(f_K)_*$.
\end{theorem}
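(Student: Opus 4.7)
The plan is to first restrict to the unbranched cover over $M_K$, then apply the ordinary-cover construction of~\cite{spectrum} to produce the cover $\hat{S}_K\to S_K$, and finally fill back in the branch points by local branched disks.

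Since $p$ branches exactly over $K$, the restriction $p^{-1}(M_K)\to M_K$ is an ordinary $n$-sheeted cover with monodromy $\rho$; and because $\S\cap K$ lies in the regular part of $\S$, the piece $\hat{\S}\cap p^{-1}(M_K)$ is a Dehn surface in $p^{-1}(M_K)$ that covers the punctured Dehn surface $\S_K=\S\cap M_K$ in the sense of~\cite{spectrum}. The construction of~\cite{spectrum} realizes the domain of such an ordinary cover as the pullback
\[
  \hat{S}_K \;=\; S_K\times_{M_K} p^{-1}(M_K)\;=\;\bigl\{(y,\hat x)\ :\ f_K(y)=p(\hat x)\bigr\},
\]
with first-factor projection the covering $\hat{S}_K\to S_K$ and second-factor projection the restriction of the parametrization $\hat f$ of $\hat{\S}$. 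A loop $\gamma$ in $S_K$ then lifts to a closed loop in $\hat{S}_K$ if and only if $f_K\circ\gamma$ lifts to a closed loop in $p^{-1}(M_K)$, which immediately identifies the monodromy as $\rho\circ(f_K)_*$. Although~\cite{spectrum} is phrased for Dehn $g$-tori, the construction is local and transfers without change to the punctured surface $S_K$.

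It remains to extend $\hat{S}_K\to S_K$ across the punctures. For each $P_i$ pick a disk neighborhood $D_i\subset S$ on which $f$ embeds and whose image meets $K$ transversely only at $f(P_i)$, and a small ball $B_i\subset M$ around $f(P_i)$. Then $p^{-1}(B_i)\to B_i$ is a disjoint union of cyclic branched covers of $B_i$ over the chord $K\cap B_i$; intersecting each piece with the lift of $f(D_i)$ yields a disk branched at $P_i$ with ramification equal to the local monodromy of $\rho$ about $f(P_i)$. Gluing these branched disks to $\hat{S}_K$ extends $p_S$ to a branched cover $\hat S\to S$ with downstairs branching set exactly $\{P_1,\ldots,P_k\}$, and $\hat S$ is by construction the domain of $\hat{\S}$. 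The main obstacle is the monodromy identification in the previous paragraph: since $f_K$ is only an immersion, a loop $\gamma\subset S_K$ may cross singular arcs of $\S$ several times, and one has to check that it is the lifting behaviour in $p^{-1}(M_K)$ rather than in $\hat{\S}$ that determines whether the lift in $\hat{S}_K$ closes up. The pullback formulation makes this transparent, since $\hat f_K$ is a local diffeomorphism and the pullback records preimage sheets of $M_K$ directly.
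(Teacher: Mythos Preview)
Your argument is correct and follows essentially the same route as the paper: the paper's proof is a one-line appeal to \cite[Sec.~4]{spectrum} (after having already remarked that the unbranched-cover construction there extends to the punctured surface $\S_K$), and you have unpacked precisely that appeal via the pullback $\hat S_K=S_K\times_{M_K}p^{-1}(M_K)$ and then completed explicitly across the punctures. Your added discussion of why the monodromy is read off in $p^{-1}(M_K)$ rather than in $\hat\S$ is a useful clarification not spelled out in the paper.
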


\begin{proof}
  The proof follows from~\cite[Sec. 4]{spectrum}.
\end{proof}

The branched covering $p_S:\hat{S}\to S$ is called the \emph{domain branched 
covering} of $p$ for $\S$ and the monodromy homomorphism 
$\rho_S:=\rho\circ(f_K)_*$ is called the \emph{domain monodromy} of 
$p$ for $\S$.

The previous results provide an algorithm for obtaining the Johansson diagram 
of a filling Dehn sphere of the coverings of $M$ branched over $K$. This 
algorithm can be summarized as follows:

\begin{algorithm}
  \label{algorithm}
  \begin{enumerate}
    
    \item Find a filling Dehn surface $\S_K$ in $M$ that splits $K$;

    \item draw the Johansson diagram $\D_K$ of $\S_K$ and mark the points
      $P_1,P_2,\ldots,P_r$ of $\S_K\cap K$ on it;

    \item find a presentation of the knot group $\pi_K$ of $K$ in terms of 
      $\S_K$ (cf.~\cite{spectrum});

    \item for each transitive representation $\rho_K$ of $\pi_K$ into a 
      permutation group $\Omega_n$, find the domain monodromy $\rho_{S}$ of 
      $\pi_{S_K}$ into $\Omega_n$;

    \item build the branched covering of $p_S:\hat{S}\to S$ with branching
      set $\{P_1,\ldots,P_r\}$ and monodromy $\rho_{S}$, and lift the 
      diagram $\D_K$ to a diagram $\hat{\D}$ on $\hat{S}$;
      
    \item by identifying the curves of $\hat{\D}$ using the
images under the representation $\rho$ of the loops dual to the
to the curves of $\D$  we obtain the Johansson diagram of a filling Dehn
 of the 
      branched covering $p^3:\hat{M} \to M$ with branching set $K$ and 
      monodromy $\rho_K$.

  \end{enumerate} 
\end{algorithm}

As the simpler filling Dehn surfaces are filling Dehn spheres, we 
want to use this algoritm to obtain the Johansson diagram of a
filling Dehn sphere of $\hat{M}$.

\begin{definition}
  A covering $p:\hat{M}\to M$ branched over the knot $K$ is \emph{locally
cyclic} if  the 
  monodromy map $\rho$ sends knot meridians onto $n$-cycles, where $n$ is the 
  number of sheets of the covering. This is equivalent to say that 
  $p:p^{-1}(K)\to K$ is a homeomorphism.
  %(A virtually regular covering with h = n is called a locally cyclic covering.
%This kind of branched covering was first considered by Kneser, and quoted by
%Seifert and Threlfall as a footnote [15].)
\end{definition}

\begin{theorem}
  \label{thm:diametral-lifting-of-spheres-are-spheres}
  In the hypotheses of Theorem~\ref{thm:lift-of-filling-surfaces}, if $K$ is a 
  knot and $\S$ is a filling Dehn sphere in $M$ which is diametral for $K$,
then 
  $\hat{\S}$ is a filling collection of spheres, and it is a Dehn sphere if and 
  only if $p$ is locally cyclic.
\end{theorem}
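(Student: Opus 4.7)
The plan is to apply Theorem~\ref{thm:domain-of-lifted-surface} to reduce the statement to a purely two-dimensional question about branched covers of $S^{2}$, and then to exploit the fact that covers of $S^{2}$ branched over two points are particularly simple.

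First, by Theorem~\ref{thm:lift-of-filling-surfaces} we already know that $\hat{\S}=p^{-1}(\S)$ is a filling Dehn surface of $\hat{M}$. What has to be analyzed is the topology of its domain $\hat{S}$. Since $\S$ is diametral, the domain of $\S$ is $S=S^{2}$ and the set $f^{-1}(K)$ consists of exactly two points $\{P_{1},P_{2}\}$. By Theorem~\ref{thm:domain-of-lifted-surface}, $\hat{S}$ is a branched covering of $S^{2}$ with branching set $\{P_{1},P_{2}\}$ and monodromy $\rho_{S}=\rho\circ(f_{K})_{*}$.

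The key observation is that a small loop $\gamma_{i}$ in $S$ around $P_{i}$ is mapped by $f_{K}$ to a small loop in $\S$ around a transverse intersection point of $\S$ with $K$, which is exactly a meridian $\mu_{i}$ of $K$ in $M$. Therefore $\rho_{S}(\gamma_{i})=\rho(\mu_{i})$. Now $\pi_{1}(S^{2}\setminus\{P_{1},P_{2}\})\cong\Z$ is generated by $\gamma_{1}$ (with $\gamma_{2}=\gamma_{1}^{-1}$), so the cover $p_{S}:\hat{S}\to S^{2}$ is completely determined by the single permutation $\sigma:=\rho(\mu_{1})\in\Omega_{n}$. The connected components of $\hat{S}$ correspond to the cycles of $\sigma$: if $\sigma$ decomposes into disjoint cycles of lengths $\ell_{1},\ldots,\ell_{r}$ with $\sum\ell_{j}=n$, then $\hat{S}$ has $r$ connected components, the $j$-th of which is the connected cyclic branched cover of $S^{2}$ of degree $\ell_{j}$ branched over the two points $P_{1},P_{2}$. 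Each of these is a $2$-sphere (the model being $z\mapsto z^{\ell_{j}}$ on the Riemann sphere, or equivalently, a Riemann--Hurwitz computation giving $\chi=2$). Consequently $\hat{S}$ is a disjoint union of $2$-spheres, and $\hat{\S}$ is a filling collection of spheres.

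Finally, $\hat{\S}$ is a (single) Dehn sphere exactly when $\hat{S}$ is connected, that is, when $\sigma$ is an $n$-cycle. Since $K$ is a knot, all its meridians are conjugate in $\pi_{K}$, so $\rho(\mu_{1})$ is an $n$-cycle if and only if every knot meridian is sent by $\rho$ to an $n$-cycle, which is precisely the locally cyclic condition. This proves both implications. The only slightly non-routine point is the identification of $\rho_{S}(\gamma_{i})$ with $\rho$ of a knot meridian; everything else is either Theorem~\ref{thm:domain-of-lifted-surface} or the classification of branched covers of $S^{2}$ with two branch points, both of which are available.
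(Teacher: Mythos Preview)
Your proof is correct and follows essentially the same route as the paper: invoke Theorem~\ref{thm:lift-of-filling-surfaces} for the filling property, use Theorem~\ref{thm:domain-of-lifted-surface} to identify $\hat{S}$ as a branched cover of $S^{2}$ over two points, conclude that $\hat{S}$ is a union of spheres, and equate connectedness of $\hat{S}$ with the locally cyclic condition. The paper's proof is terser, asserting the last two steps without justification; you supply the missing details (the cycle decomposition of $\sigma=\rho(\mu_{1})$, the Riemann--Hurwitz count, and the conjugacy of meridians), which is a genuine improvement in clarity but not a different argument.
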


\begin{proof}
As $\S$ is diametral for $K$, by Theorem~\ref{thm:domain-of-lifted-surface} the
domain $\hat{S}$ of $\hat{\S}$
is a branched covering space of the domain $S$ of $\S$, which is a $2$-sphere,
with two points as branching set.
This implies that $\hat{S}$ is a disjoint union of $2$-spheres, and by Theorem
\ref{thm:lift-of-filling-surfaces},
$\hat{\S}$ is a filling collection of spheres. The Dehn surface $\hat{\S}$ is a
Dehn sphere
if and only if its domain $\hat{S}$ is connected, and this occurs if and only if
the domain monodromy 
$\rho_S$ acts transitively. This is equivalent
to the fact that
the branched covering $p$ is locally cyclic.
\end{proof}

Therefore, after combining Algorithm~\ref{algorithm} and surgery operations in
filling collections of spheres, we can eventually obtain the Johansson diagram
of a
filling Dehn sphere of $\hat{M}$.

\section{Example: Johansson's sphere and the unknot}\label{sec:unknot}

\begin{figure}\label{fig:Johansson_sphere}
\centering
\subfigure[]{
  \includegraphics{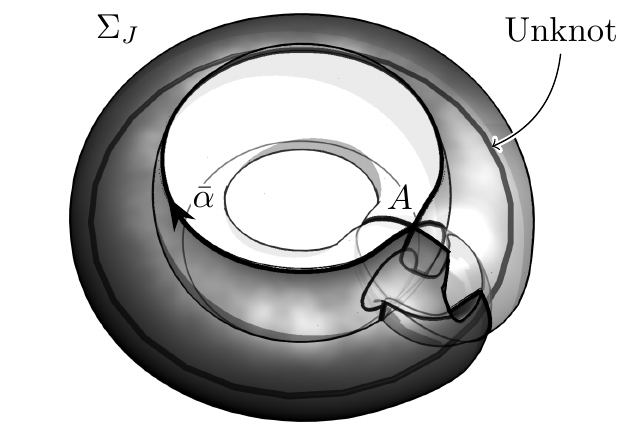}
  \label{fig:Johansson_sphere_01}%
}%
\\
\subfigure[]{%
  \includegraphics{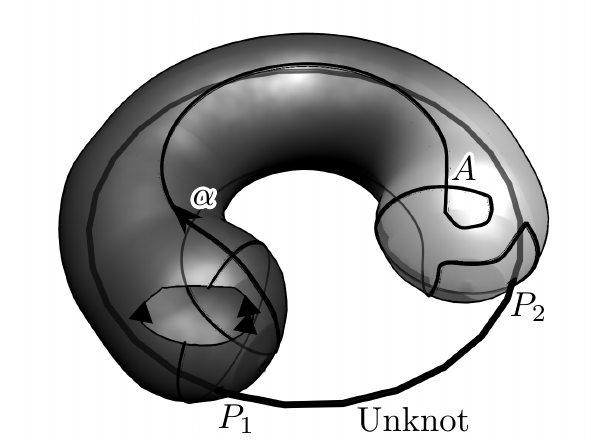}
  \label{fig:Johansson_sphere_diagram_donut}%
}\hfill%
\subfigure[]{
  \includegraphics{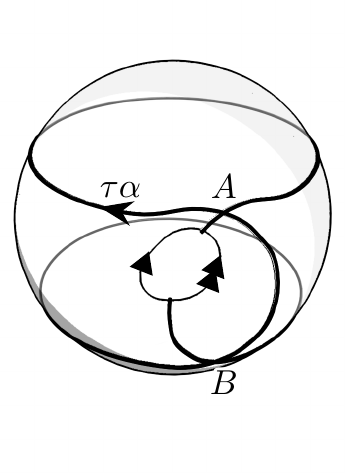}
  \label{fig:Johansson_sphere_diagram_ball}%
}
\caption{Johansson's sphere splitting the unknot}
\end{figure}

The Dehn sphere $\S_J$ depicted in Figure~\ref{fig:Johansson_sphere_01} 
is called \emph{Johansson's
sphere} in~\cite{tesis} because its Johansson diagram appears
in~\cite{Johansson1}. In the Johansson diagram the two sister curves must be
identified as indicated by the arrows. The Dehn sphere $\S_J$ can be obtained
using an algorithm introduced in~\cite{Anewproof},
after connecting by surgery an immersed
torus-like sphere (Figure~\ref{fig:pre-Johansson_sphere_diagram_donut}) with an
embedded sphere (Figure~\ref{fig:pre-Johansson_sphere_diagram_ball})
intersecting as in Figure~\ref{fig:pre-Johansson_sphere_01}.
According to~\cite{Shima} and~\cite{tesis}, it is one of
the three unique filling Dehn spheres of $S^3$ with only two triple points. If
we consider the unknot $K$ intersecting $\S_J$ as in the same 
Figure~\ref{fig:Johansson_sphere_01} , it is clear
that $\S_J$ splits $K$, and it can be checked using
Example 7.1 of~\cite{Anewproof} that the two points $P_1,P_2$ of $\S_J\cap K$
correspond to the points also labelled by $P_1,P_2$ in the diagram of 
Figure~\ref{fig:Johansson_diagram}.
\begin{figure}
\centering
\subfigure[]{%
  \includegraphics{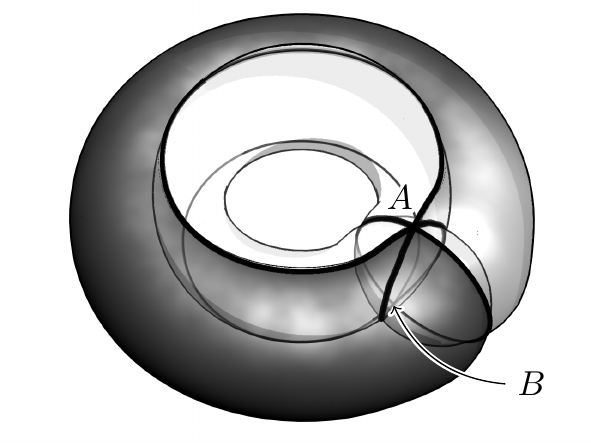}
  \label{fig:pre-Johansson_sphere_01}}%
  \\
\subfigure[]{%
  \includegraphics{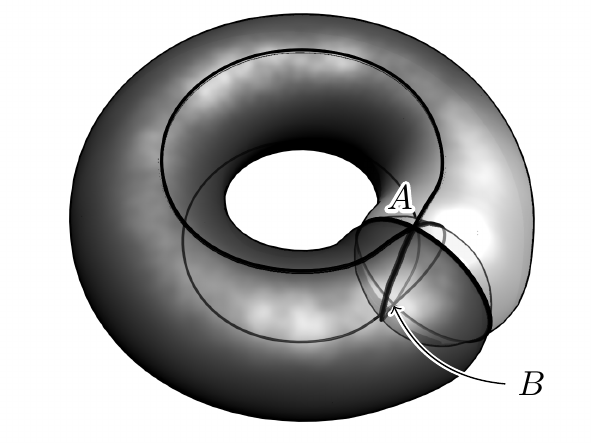}
  \label{fig:pre-Johansson_sphere_diagram_donut}}%
  \hfill%
\subfigure[]{%
  \includegraphics{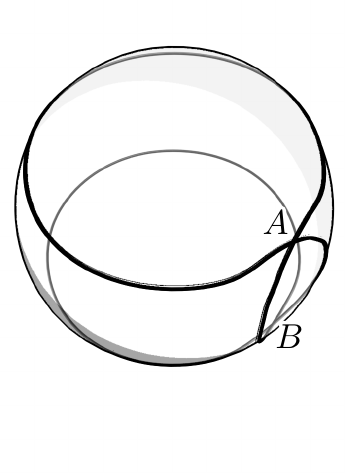}
    \label{fig:pre-Johansson_sphere_diagram_ball}%
  }
  \caption{Constructing Johansson's sphere}
  \label{fig:pre-Johansson_sphere}
\end{figure}

By~\cite{spectrum}, the fundamental group of $\S_J-K$ based at $x$ is 
generated by the loops $m$ and $a$, where:
\begin{itemize}
  
  \item $m$ is (the image in $S^3$ of) the generator of the fundamental group 
    of $S^2-\{P_1,P_2\}$ depicted in Figure~\ref{fig:Johansson_diagram};

  \item  the loop $a$ is \emph{dual} to the curve $\alpha$ of the diagram $\D$: 
    it is the loop in $\S_J$ composed by the product of paths 
    $\lambda_\alpha*\lambda_{\tau \alpha}^{-1}$, where $\lambda_\alpha$ and 
    $\lambda_{\tau \alpha}$ are paths joining $x$ with related points on 
    $\alpha$ and $\tau\alpha$ (Figure~\ref{fig:Johansson_diagram}).	 

\end{itemize}
It must be noted that the loop $m$ in $\S_J$ is homotopic to a meridian of $K$.
Using~\cite{spectrum}, it can be checked also that the element $a$ is trivial
in $\pi_K$ and that $\pi_K$ is the infinite cyclic group generated by $m$.
\begin{figure}
  \centering
  \subfigure[]{
    \includegraphics{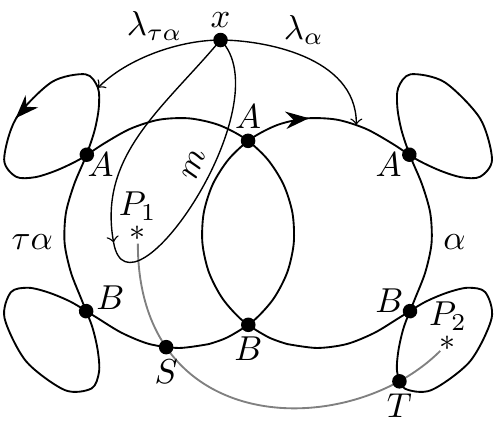}
    \label{fig:Johansson_diagram}
  }%
  \hfill%
  \subfigure[]{
    \includegraphics{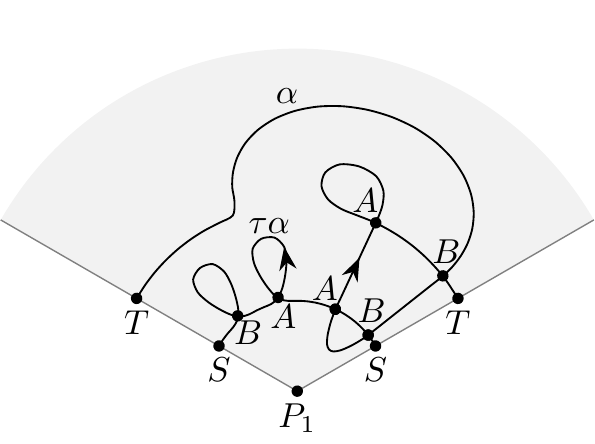}
    \label{fig:abanico}
  }
  \caption{Johansson's diagram of the unknot}
  \label{fig:Johansson_diagram_abanico}
\end{figure}

As $\pi_K$ is cyclic, the only coverings of $S^3$ branched over $K$ are the
cyclic ones, and it is well known that the unique manifold that covers $S^3$
branching over $K$ is again $S^3$. Assume that $p:S^3\to S^3$ is the
$n$-fold covering branched over $K$. By Theorems
\ref{thm:lift-of-filling-surfaces} and 
\ref{thm:diametral-lifting-of-spheres-are-spheres}, $\hat{\S}_J=p^{-1}(\S_J)$ is
a filling Dehn sphere of $S^3$. By Theorem~\ref{thm:domain-of-lifted-surface}
and~\cite{spectrum}, the Johansson diagram $\hat{\D}_J$ of $\hat{\S}_J$ is
obtained lifting $\D_J$ to the $n$-fold cover of $S^2$ branched over
$\{P_1,P_2\}$, and the sistering of the curves of $\hat{\S}_J$ is determined
by the image of the dual loops of the curves of $\D_J$ under the monodromy
homomorphism induced by $p$. If we cut the $2$-sphere of 
Figure~\ref{fig:Johansson_diagram} along an arc joining $P_1$ and $P_2$ and
after that we send the point $P_2$ through infinity, we obtain a \emph{fan} as
this depicted in Figure~\ref{fig:abanico}. By pasting ciclically $n$ copies
of this fan we obtain the Johansson diagram of $\hat{\S}_J$.
We have depicted the diagrams that
arise in this
situation for the cases $n=2,3$ in Figure~\ref{fig:liftings}. Because $a$ is
trivial in $\pi_K$, the sistering of the curves of the diagram must be as
indicated in this figure. In all cases we obtain a Johansson diagram with
just two curves, both with self-intersections, and this implies that the Dehn
spheres $\hat{\S}_J$ are all simply connected. If we had chosen $K$ in a
different relative position with respect to $\S_J$, we would have obtained a
different family of Johansson diagrams of $S^3$ from these branched coverings.

Of course, the cyclic coverings of $S^3$ branched over the unknot might not be
very interesting, but \emph{exactly the same construction
can be
done for the cyclic and locally cyclic branched coverings of any knot $K$}, once
a diametral Johansson diagram of $K$ have been obtained.
\begin{figure}
  \label{fig:liftings}
  \centering
  \subfigure[]{%
    \includegraphics{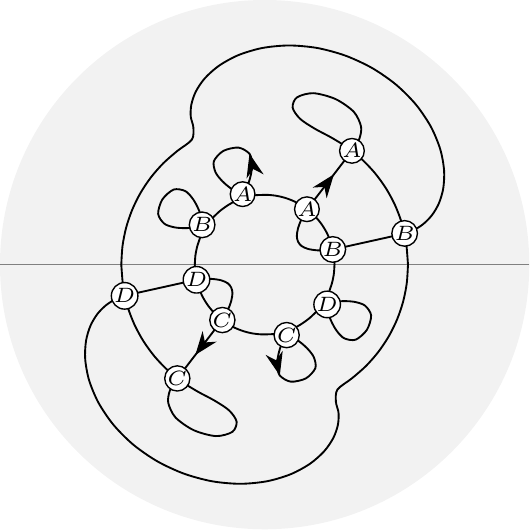}
    \label{fig:liftings2}%
  }%
  \hfill%
  \subfigure[]{%
    \includegraphics{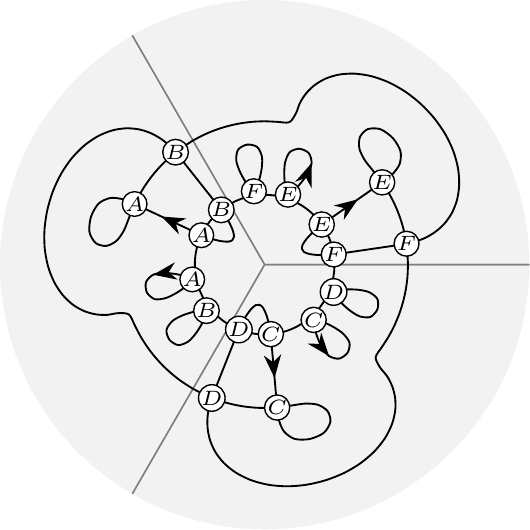}
    \label{fig:liftings3}%
  }
  \caption{Lifting $\S_J$ to cyclic branched coverings over the unknot}
\end{figure}

\section*{Acknowledgements}
The authors want to thank Professors J.M. Montesinos and M.T. Lozano for their
valuable advices about the content of this paper.

\bibliographystyle{amsplain}

\end{document}